\documentclass[a4paper,twoside, reqno]{amsart}

\usepackage{amsmath}
\usepackage{amsthm}
\usepackage{amscd}
\usepackage{amssymb}
\usepackage{array}
\usepackage{hyperref}
\usepackage[pdftex]{graphicx}
\usepackage[latin1]{inputenc}
\usepackage{latexsym}
\usepackage{mathdots}
\usepackage{multirow}
\usepackage{stmaryrd}
\usepackage[all]{xy}

\newtheorem{theorem}{Theorem}[section]
\newtheorem{lemma}[theorem]{Lemma}
\newtheorem{corollary}[theorem]{Corollary}
\newtheorem{definition}[theorem]{Definition}
\newtheorem{proposition}[theorem]{Proposition}
\newtheorem{remark}[theorem]{Remark}

\newcommand{\Ad}[0]{\operatorname{Ad}}
\newcommand{\Aut}[0]{\operatorname{Aut}}

\newcommand{\id}[0]{\operatorname{id}}

\newcommand{\nuc}[0]{\operatorname{nuc}}
\newcommand{\nucdim}[0]{\operatorname{dim}_{\nuc}}

\title{On the nuclear dimension of certain UCT-Kirchberg algebras}
\date{May 20, 2014}

\author{Dominic Enders}
\thanks{This work was supported by the SFB 878 {\itshape Groups, Geometry and Actions} and the Danish National Research Foundation through the {\itshape Centre for Symmetry and Deformation} (DNRF92).}
\subjclass[2010]{46L35}

\address{Dominic Enders
\newline Department of Mathematical Sciences, University of Copenhagen
\newline Universitetsparken 5, DK-2100 Copenhagen \O, Denmark}
\email{d.enders@math.ku.dk}

\begin{document}

\begin{abstract}
We give a partial answer to the question for the precise value of the nuclear dimension of UCT-Kirchberg algebras raised by W. Winter and J. Zacharias in \cite{WZ10}.
It is shown that every Kirchberg algebra in the UCT-class with torsion free $K_1$-group has nuclear dimension 1.
\end{abstract}

\maketitle

\section{Introduction}

In 2010, Winter and Zacharias introduced the notion of nuclear dimension for \linebreak $C^*$-algebras as a non-commutative analogue of topological covering dimension \linebreak (\cite{WZ10}).
Since then it has clearly become one of the most important concepts \linebreak in the Elliott classification program, i.e. the classification of simple, nuclear \linebreak $C^*$-algebras by $K$-theoretic data.
In particular, finite-dimensionality with respect to this notion of dimension constitutes one of the fundamental regularity properties for the $C^*$-algebras in question.
It is part of the Toms-Winter conjecture that in this situation finite nuclear dimension is presumably equivalent to $\mathcal{Z}$-stability and therefore a necessary assumption in order to obtain classification by the Elliott invariant.
This conjecture received a great deal of attention and the equivalence mentioned above has been partially verified (\cite{SWW14}, \cite{Win12}).
On the other hand, large classes of simple, nuclear $C^*$-algebras with finite nuclear dimension have been successfully classified by their Elliott invariant (e.g. \cite{Lin11}, \cite{Win12}).

However, once a $C^*$-algebra having finite nuclear dimension belongs to a class which is classified by some invariant, it is a natural question to ask for the precise value of its dimension and how it can be read off from its invariant.
In this paper we study this question for the class of UCT-Kirchberg algebras, i.e. purely infinite, simple, separable, nuclear $C^*$-algebras satisfying the UCT which are, due to Kirchberg and Phillips, completely classified by their $K$-groups.
In \cite{WZ10}, Winter and Zacharias showed that all these algebras have nuclear dimension at most 5 and asked whether the precise value of their dimension is determined by algebraic properties of their $K$-groups, such as torsion (\cite[Problem 9.2]{WZ10}).
Here, we give a partial answer to their question and give the optimal dimension estimate in the absence of $K_1$-torsion.
More precisely, we show that the nuclear dimension of UCT-Kirchberg algebras with torsion free $K_1$-groups equals 1 (Theorem \ref{main}).

Our methods of proof used in the computation of the nuclear dimension differ from the arguments in the original estimate by Winter and Zacharias in two essential ways.
First, we do not make use of Cuntz-Pimsner algebras, as done in \cite{WZ10}, but employ crossed product models for UCT-Kirchberg algebras which are provided by the work of R{\o}rdam (\cite{Ror95}).
Second, we do not construct completely positive approximations for the identity map on a given Kirchberg algebra $A$ directly.
Instead, we study a certain family of homomorphisms $\iota_n\colon A\rightarrow M_n(A)$ and show that the $\iota_n$ can be approximated in a decomposable manner.
While the error we have to make in the approximation of each $\iota_n$ is bounded from below, the lower bound will be small for large $n$.
In a second step, we show how to combine the $\iota_n$ in order to construct a homomorphism $A\rightarrow M_k(A)$ which admits decomposable approximations of the same quality as the $\iota_n$, but in addition induces an isomorphism on $K$-theory.  
By Kirchberg-Phillips classification, this map can be perturbed to the identity map on $A$ and by that provides suitable approximations for estimating the nuclear dimension of $A$. \vspace{1cm}

This paper is organized as follows: First, right now, we define certain matrix embeddings for crossed products which are the main objects of study in this paper.
\begin{definition}\label{iota}
Let $A$ be a $C^*$-algebra, $\alpha\in\Aut(A)$ an automorphism and $A\rtimes_\alpha\mathbb{Z}$ the corresponding full crossed product.
For $n\in\mathbb{N}$, $n\geq 2$, we let
\[\iota_n\colon A\rtimes_\alpha\mathbb{Z} \rightarrow M_n(A\rtimes_\alpha\mathbb{Z})\]
denote the embedding given by 
\[\begin{tabular}{ccc}$a \mapsto\begin{pmatrix}
                  \alpha^{-1}(a) & 0 & \cdots& 0 \\  0 & \alpha^{-2}(a) & \ddots& \vdots\\ \vdots & \ddots& \ddots& 0 \\  0 & \cdots& 0 &  \alpha^{-n}(a)
                 \end{pmatrix}$
& and
& $U \mapsto\begin{pmatrix}
                  0 & \cdots & \cdots & 0 & U^n \\
                  1 & \ddots &&& 0 \\
                  0 & \ddots & \ddots & & \vdots \\
                  \vdots & \ddots & \ddots & \ddots & \vdots \\
                  0 & \cdots & 0 & 1 &0
                 \end{pmatrix}$\end{tabular}\]
for $a\in A$ resp. for the unitary $U\in\mathcal{M}(A\rtimes_\alpha\mathbb{Z})$ implementing the action $\alpha$.
\end{definition}
In section \ref{section approximation}, we show that these maps $\iota_n$ admit completely positive approximations in an order zero fashion, at least up to an error which is small for large $n$. 
The $K$-theory of these maps can be computed in many cases of interest, this is done in section \ref{section k-theory}.
A combination of these results yields the dimension estimate for Kirchberg algebras in section \ref{section main}. 
We finish with some remarks about possible generalizations of the technique developed in this paper.


\section{Decomposable approximations for $\iota_n$}\label{section approximation}

We want to study certain maps which admit completely positive approximations of the same type as in the original definition of nuclear dimension in \cite{WZ10}.
In our situation, however, we won't be able to find approximations of arbitrary small error (which would rather lead to a notion of nuclear dimension for maps).
Therefore we need to keep track of the precision of approximation that we can get for a given map.
This will be done using the following definition.

\begin{definition}\label{def decomposition}
Let $\alpha\colon A\rightarrow B$ be a map between $C^*$-algebras $A$ and $B$.
Given \linebreak $d\in\mathbb{N}_0$, $\epsilon>0$ and a finite subset $\mathcal{G}$ of $A$, we say that $\alpha$ admits a piecewise contractive, completely positive, $d$-decomposable $(\mathcal{G},\epsilon)$-approximation if
there exists $(F,\psi,\varphi)$ such that $F$ is a finite-dimensional $C^*$-algebra, and such that $\psi\colon A\rightarrow F$ and $\varphi\colon F\rightarrow B$ are completely positive maps satisfying

\begin{enumerate}
 \item $\|(\varphi\circ\psi)(a)-\alpha(a)\|<\epsilon$ for all $a\in\mathcal{G}$;
 \item $\psi$ is contractive;
 \item $F$ decomposes as $F=F^{(0)}\oplus\cdots\oplus F^{(d)}$ such that $\varphi_{|F^{(i)}}$ is a c.p.c. order zero map for each $i=0,...,d$.
\end{enumerate}
\end{definition}

In this terminology, the notion of nuclear dimension for a $C^*$-algebra $A$, as defined by Winter and Zacharias in \cite{WZ10}, can be formulated as follows:

\[\begin{tabular}{rc}
\multirow{2}{*} {$\nucdim(A)\leq d\quad\Leftrightarrow$}  & 
$\id_A$ admits piecewise contractive, c.p., $d$-decomposable, \\ & $(\mathcal{G},\epsilon)$-approximations for all finite $\mathcal{G}\subset A$ and all $\epsilon>0$. 
\end{tabular}\]

We first note that the approximations of \ref{def decomposition} for $^*$-homomorphisms are well-behaved with respect to composition, orthogonal sum and approximate unitary equivalence.
We collect some of these elementary permanence results in the following lemma.

\begin{lemma}\label{permanence}
Let $*$-homomorphisms $\xymatrix{A \ar[r]^\alpha & B \ar[r]^{\beta_1,\beta_2} & C\ar[r]^\gamma & D}$ between \linebreak $C^*$-algebras $A,B,C,D$, a finite subset $\mathcal{G}\subset B$, $\epsilon>0$ and $d\in\mathbb{N}_0$ be given.
If both $\beta_1$ and $\beta_2$ admit piecewise contractive, completely positive, $d$-decomposable $(\mathcal{G},\epsilon)$-approximations $(F_i,\psi_i,\varphi_i)$, $i\in\{1,2\}$, then the following holds:
\begin{enumerate}
 \item The composition $\gamma\circ\beta_1$ admits a piecewise contractive, completely positive, $d$-decomposable $(\mathcal{G},\epsilon)$-approximation.
 \item For any $\alpha$-preimage $\mathcal{G}'$ of $\mathcal{G}$, the composition $\beta_1\circ\alpha$ admits a piecewise contractive, completely positive, $d$-decomposable $(\mathcal{G'},\epsilon)$-approximation.
 \item The orthogonal sum $\begin{pmatrix}\beta_1 & 0 \\ 0 & \beta_2\end{pmatrix}\colon B\rightarrow M_2(C)$ admits a piecewise contractive, completely positive, $d$-decomposable $(\mathcal{G},\epsilon)$-approximation.
 \item If $\beta_1\sim_{a.u.}\delta$ for some $\delta\colon B\rightarrow C$, then $\delta$ admits a piecewise contractive, completely positive, $d$-decomposable $(\mathcal{G},\epsilon)$-approximation.
\end{enumerate}
\end{lemma}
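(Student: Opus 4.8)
The plan is to treat the four permanence statements separately, since each is essentially a bookkeeping exercise once one keeps careful track of which of the three defining properties is affected by the operation in question. Throughout, let $(F_i,\psi_i,\varphi_i)$ denote the given $d$-decomposable $(\mathcal{G},\epsilon)$-approximations of $\beta_i$, and write $F_i=F_i^{(0)}\oplus\cdots\oplus F_i^{(d)}$ for the decompositions witnessing property (3).

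For (1), I would simply post-compose: the candidate approximation of $\gamma\circ\beta_1$ is $(F_1,\psi_1,\gamma\circ\varphi_1)$. Property (2) is unchanged since $\psi_1$ is still the first leg. For property (1), $\|(\gamma\circ\varphi_1\circ\psi_1)(b)-(\gamma\circ\beta_1)(b)\|\leq\|(\varphi_1\circ\psi_1)(b)-\beta_1(b)\|<\epsilon$ for $b\in\mathcal{G}$, using that $\gamma$ is contractive. For property (3), each $(\gamma\circ\varphi_1)_{|F_1^{(i)}}=\gamma\circ(\varphi_{1|F_1^{(i)}})$ is a composition of a c.p.c.\ order zero map with a $*$-homomorphism, hence again c.p.c.\ order zero. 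Statement (2) is dual: the candidate is $(F_1,\psi_1\circ\alpha,\varphi_1)$; property (1) follows because for $b'\in\mathcal{G}'$ with $\alpha(b')\in\mathcal{G}$ we have $(\varphi_1\circ\psi_1\circ\alpha)(b')$ close to $(\beta_1\circ\alpha)(b')$; property (2) holds as $\psi_1\circ\alpha$ is a composition of contractions; property (3) is untouched since $\varphi_1$ and its decomposition are unchanged.

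For (3), I would form $F:=F_1\oplus F_2$, $\psi:=\psi_1\oplus\psi_2\colon B\to F$ (i.e.\ $b\mapsto(\psi_1(b),\psi_2(b))$), and $\varphi:=\operatorname{diag}(\varphi_1,\varphi_2)\colon F\to M_2(C)$. Property (1) is immediate entrywise. Property (2): for $\psi$ to be contractive one uses that the norm of an element of $B\otimes\mathcal{K}$ mapped into $F_1\oplus F_2$ (both sitting diagonally) is the maximum of the two norms, so $\|\psi\|\leq\max\{\|\psi_1\|,\|\psi_2\|\}\leq 1$. Property (3): regroup the summands of $F$ by index, $F^{(i)}:=F_1^{(i)}\oplus F_2^{(i)}$ for $i=0,\dots,d$; then $\varphi_{|F^{(i)}}$ sends $(x,y)$ to $\operatorname{diag}(\varphi_{1|F_1^{(i)}}(x),\varphi_{2|F_2^{(i)}}(y))$, which is an orthogonal sum of two c.p.c.\ order zero maps and hence c.p.c.\ order zero (orthogonal ranges and the order zero relation are preserved under such block-diagonal sums). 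For (4), given $\beta_1\sim_{a.u.}\delta$, pick a unitary (or a sequence of unitaries in the unitization of $C$) $u$ with $\|u\beta_1(b)u^*-\delta(b)\|$ small for $b\in\mathcal{G}$; the candidate approximation of $\delta$ is $(F_1,\psi_1,\operatorname{Ad}(u)\circ\varphi_1)$. Property (2) is unchanged; property (3) holds because $\operatorname{Ad}(u)$ is a $*$-automorphism, so composing with the order zero pieces preserves order zero; property (1) follows from a triangle-inequality estimate $\|u(\varphi_1\psi_1)(b)u^*-\delta(b)\|\leq\|(\varphi_1\psi_1)(b)-\beta_1(b)\|+\|u\beta_1(b)u^*-\delta(b)\|$, where one first shrinks $\epsilon$ and then chooses $u$ accordingly.

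The only genuine subtlety, and the step I would be most careful about, is the stability of c.p.c.\ order zero maps under the operations invoked in properties (3) of cases (1), (3) and (4): post-composition with a $*$-homomorphism, pre-composition with a $*$-isomorphism, and forming block-diagonal orthogonal sums. These are standard facts about order zero maps (a c.p.c.\ map is order zero iff it factors through the cone $C_0((0,1])\otimes$(its target), by the structure theorem of Winter--Zacharias, and each operation above is visibly compatible with such a factorization), so I would state them as a short remark rather than reprove them. Everything else is a routine tracking of norms through contractive maps and the triangle inequality, with the mild point in (4) that approximate unitary equivalence only yields the estimate in the limit, so one fixes the tolerance before selecting the implementing unitary.
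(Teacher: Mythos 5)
Your proposal is correct and uses exactly the approximating triples the paper itself names in each of the four cases, merely supplying the verifications the paper leaves to the reader. The only wording to fix is in (4): you cannot "shrink $\epsilon$" since the approximation of $\beta_1$ is given at tolerance $\epsilon$, but finiteness of $\mathcal{G}$ and strictness of the inequality leave positive slack $\epsilon-\max_{b\in\mathcal{G}}\|(\varphi_1\circ\psi_1)(b)-\beta_1(b)\|$, within which one then chooses the implementing unitary, exactly as in the paper's "for $n$ sufficiently large".
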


\begin{proof}
We only name suitable approximating systems and leave the details to the reader.
For (1) one chooses $(F_1,\psi_1,\gamma\circ\varphi_1)$, while for (2) $(F_1,\psi_1\circ\alpha,\varphi_1)$ works.
For (3) consider $\left(F_1\oplus F_2,\psi_1\oplus\psi_2,\begin{pmatrix}\varphi_1 & 0 \\ 0 & \varphi_2\end{pmatrix}\right)$ with respect to the decomposition $(F_1\oplus F_2)^{(i)}=F_1^{(i)}\oplus F_2^{(i)}$.
Given $\delta=\lim\limits_{n\rightarrow\infty}\Ad(U_n)\circ\beta_1$ in (4), one checks that the triple $(F,\psi_1,\Ad(U_n)\circ\varphi_1)$ works for $n$ sufficiently large.
\end{proof}

Next, we apply a 'cut and paste'-technique similar to the one used in \cite[section 7]{WZ10} to the maps $\iota_n\colon A\rtimes_\alpha\mathbb{Z}\rightarrow M_n(A\rtimes_\alpha\mathbb{Z})$ of Definition \ref{iota}.
This essentially reduces a decomposable approximation of $\iota_n$ to a decomposable approximation of the coefficient algebra $A$.

\begin{lemma}\label{approximation1}
 Let a nuclear $C^*$-algebra $A$, an automorphism $\alpha\in\Aut(A)$, a finite subset $\mathcal{G}\subset A\rtimes_\alpha\mathbb{Z}$ and $\epsilon>0$ be given.
 Then the following holds for the maps $\iota_n$ as defined in \ref{iota}: 
 For all sufficiently large $n$ there exists a c.c.p. map $\psi_n\colon A\rtimes_\alpha\mathbb{Z}\rightarrow M_n(A)$ and two $\ast$-homomorphisms $\Lambda_n^0,\Lambda_n^1\colon M_n(A)\rightarrow M_n(A\rtimes_\alpha\mathbb{Z})$ such that the diagram
\[
 \begin{xy}
  \xymatrix{A\rtimes_\alpha\mathbb{Z} \ar[rr]^{\iota_n} \ar[dr]_{\psi_n} && M_n(A\rtimes_\alpha\mathbb{Z}) \\ & M_n(A) \ar[ur]_{\Lambda_n^0+\Lambda_n^1}}
 \end{xy}
\]
 commutes up to $\epsilon$ on $\mathcal{G}$, i.e. $\|\iota_n(x)-((\Lambda_n^0+\Lambda_n^1)\circ\psi_n)(x)\|<\epsilon$ holds for all $x\in\mathcal{G}$.
\end{lemma}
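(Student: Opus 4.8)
The plan is to reduce the approximation of $\iota_n$ to an explicit "cut and paste" on the cyclic set of $n$ matrix coordinates. First I would normalise $\mathcal{G}$: since $\psi_n$ and the $\Lambda_n^i$ are linear ($\psi_n$ being c.c.p., $\Lambda_n^i$ being $*$-homomorphisms) and $\iota_n$ is linear on the dense $*$-subalgebra of finitely supported elements, it suffices to prove the estimate for a set of the form $\mathcal{G}=\{aU^k:a\in A,\ \|a\|\le 1,\ |k|\le R\}$, where $R$ is chosen so that every element of the originally given $\mathcal{G}$ lies within, say, $\epsilon/6$ of a finite sum of such elements; the general case follows by the triangle inequality together with $\|\Lambda_n^0+\Lambda_n^1\|\le 2$. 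From now on $n>R$, and eventually $n$ will be much larger. I would then record $\iota_n$ explicitly on these elements: for $0\le k\le R$ one has $\iota_n(aU^k)=D(a)V^k$ with $D(a)=\diag(\alpha^{-1}(a),\dots,\alpha^{-n}(a))\in M_n(A)$ and $V^k=S^k+\omega_k$, where $S\in M_n(\mathbb{C})$ is the nilpotent (non-wrapping) shift and $\omega_k$ is the "wrap-around" part, a partial isometry supported on an $R\times R$ corner and carrying the unitary $U^n\in\mathcal{M}(A\rtimes_\alpha\mathbb{Z})$; negative $k$ is analogous with $U^{-n}$. The point of this computation is that the \emph{only} obstruction to $\iota_n$ taking values in $M_n(A)$ is this wrap-twist, which affects at most $R$ matrix entries, all located near the single "seam" joining coordinates $n$ and $1$.

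Second, the cut-and-paste itself. I would observe that $\iota_n^{\mathrm{cut}}:=P_n\,\pi(\,\cdot\,)\,P_n$, the compression of the regular representation $\pi$ of $A\rtimes_\alpha\mathbb{Z}$ to the first $n$ coordinates, is a contractive c.p.\ map into $M_n(A)$ which agrees with "$\iota_n$ with the wrap-twist deleted" on all elements of $\mathcal{G}$ (indeed $\iota_n^{\mathrm{cut}}(aU^k)=D(a)S^k$). Modelling $\{1,\dots,n\}$ as $\mathbb{Z}/n\mathbb{Z}$, I would cover it by two arcs $\mathcal{I}_0,\mathcal{I}_1$ and fix a subordinate "partition of unity": diagonal matrices $h_0,h_1\in M_n(\mathbb{C})_+$ with $h_0^2+h_1^2=1_n$, with $h_j$ supported on (a neighbourhood of) $\mathcal{I}_j$, and slowly varying along each arc in the precise sense that $\|[h_j,S]\|\le\delta$ for a $\delta=\delta(R,\epsilon)$ to be fixed. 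The map $\psi_n$ is then built from $h_0\,\iota_n^{\mathrm{cut}}(\,\cdot\,)\,h_0$ and $h_1\,\iota_n^{\mathrm{cut}}(\,\cdot\,)\,h_1$, assembled inside $M_n(A)$; it is c.c.p.\ as a composition of such operations. The maps $\Lambda_n^0,\Lambda_n^1\colon M_n(A)\to M_n(A\rtimes_\alpha\mathbb{Z})$ are the $*$-homomorphisms that re-embed the $\mathcal{I}_0$-, resp.\ $\mathcal{I}_1$-data and in doing so reinstate the correct $U^{\pm n}$-twist on the relevant seam entries. Concretely each $\Lambda_n^i$ is taken of the form $\Ad(v_i)$ for a suitable isometry $v_i\in M_n(\mathcal{M}(A\rtimes_\alpha\mathbb{Z}))$ assembled from the matrix units of $M_n$ together with $U,U^{-1},U^{\pm n}\in\mathcal{M}(A\rtimes_\alpha\mathbb{Z})$; any such $\Ad(v_i)$ restricts to a $*$-homomorphism $M_n(A)\to M_n(A\rtimes_\alpha\mathbb{Z})$.

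Third, I would verify the estimate. Expanding $(\Lambda_n^0+\Lambda_n^1)(\psi_n(aU^k))$ and comparing it entrywise with $D(a)V^k$ for $aU^k\in\mathcal{G}$: on the "bulk" (non-wrap) entries the two $h_j$-weighted copies combine to the corresponding entry of $D(a)S^k$ up to the identity $h_0^2+h_1^2=1$ and the commutators $[h_j,S^k]$; on the $\le R$ seam entries the $U^{\pm n}$-twist has been put back by the $\Lambda_n^i$, again up to the $[h_j,S^k]$ error and up to the attenuation of the (bounded, finite-rank) wrap part near the arc endpoints. All told one obtains a bound of the form $\|\iota_n(aU^k)-(\Lambda_n^0+\Lambda_n^1)(\psi_n(aU^k))\|\le C\,R\,\|[h_j,S]\|\le C\,R\,\delta$, which is below $\epsilon/(2(2R+1))$ once $\delta$ is chosen small; since a partition of unity with the required smoothness exists on $\mathbb{Z}/n\mathbb{Z}$ as soon as $n\ge n_0(R,\epsilon)$, this proves the claim for all sufficiently large $n$. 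I expect the main obstacle to be \emph{not} this final estimate, which is routine, but the structural work of the second step: simultaneously making $\psi_n$ an honest c.c.p.\ map into $M_n(A)$ (not a mere linear map), making $\Lambda_n^0,\Lambda_n^1$ honest $*$-homomorphisms (so that no ad hoc truncation or rescaling is permitted), and fitting the smoothed two-arc decomposition into $M_n(A)$ while still covering the wrap-twist — this bookkeeping is the crux of the argument. Finally I would note that nuclearity of $A$ is not really needed here; it enters only afterwards, when $\psi_n$, whose codomain is the relatively simple algebra $M_n(A)$ rather than a crossed product, is further factorised through a finite-dimensional algebra, the $*$-homomorphisms $\Lambda_n^i$ then transporting the order-zero structure of that factorisation.
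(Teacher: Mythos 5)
Your overall route --- represent $A\rtimes_\alpha\mathbb{Z}$ on $\ell^2(\mathbb{Z})\otimes H$, compress to the first $n$ coordinates, weight the compressed matrix, and re-embed by two $*$-homomorphisms of which one re-installs the $U^{\pm n}$-twist at the seam --- is exactly the cut-and-paste argument the paper uses (following Section 7 of \cite{WZ10}), and your reduction to elements $aU^k$ and the final entrywise estimate are fine. The problem is that the step you yourself call ``the crux'' is left unresolved, and as described it does not work. If $\psi_n(x)$ is the single element $h_0(P_nxP_n)h_0+h_1(P_nxP_n)h_1\in M_n(A)$, then each $\Lambda_n^i$, being a $*$-homomorphism defined on all of $M_n(A)$, necessarily acts on the whole matrix; it cannot ``re-embed the $\mathcal{I}_i$-data'' and ignore the other summand. (Selective behaviour is impossible: the kernels of homomorphisms on $M_n(A)$ are of the form $M_n(I)$, and $\Ad$ of a proper partial isometry is not multiplicative on all of $M_n(A)$; placing the two arcs in orthogonal diagonal blocks fails too, since the arcs must overlap on length of order $R/\delta$ and hence do not fit inside $M_n$.) Concretely, with your normalization $h_0^2+h_1^2=1$ the Schur weight of $\psi_n$ is $\approx 1$ on the whole diagonal band, and since the seam entries of $\iota_n(aU^k)$ can only be produced by a rotation-type $\Lambda_n^1$ (the compression $P_nxP_n$ has no corner entries at all), the sum $(\Lambda_n^0+\Lambda_n^1)\circ\psi_n$ reproduces the bulk band entries with total weight close to $2$. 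The resulting error is of order $\|x\|$, not $O(K/n)$, so the asserted estimate fails. The only consistent repair of your scheme either changes the codomain to $M_n(A)\oplus M_n(A)$ with each $\Lambda_n^i$ using one summand (which is no longer the statement as formulated, though it would still suffice for the later application), or arranges the weights so that the two re-embeddings together carry total weight $\approx 1$ --- and that is precisely the idea that is missing.

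The paper resolves this by using one Schur multiplier instead of a two-sided partition of unity: $\psi_n(x)=\kappa_n\ast(P_nxP_n)$, where $\kappa_n\in M_n(\mathbb{C})$ is the positive ``pyramid'' matrix with entries $\min(i,j,n{+}1{-}i,n{+}1{-}j)/(m{+}1)$ (so $\kappa_n\approx 1$ in the middle and $\approx 0$ at the seam), $\Lambda_n^0=j$ is the canonical embedding, and $\Lambda_n^1=\Ad(V^m)\circ j$ with $V$ the explicit unitary built from $U^{-1}$ on the subdiagonal and $U^{n-1}$ in the corner. Conjugation by $V^m$ simultaneously rotates the weighted band by half a period --- so that the original and rotated weights sum to exactly $1$ on the diagonal and to within $k/(m+1)$ of $1$ on the $k$-band --- and installs the correct $U^{n}$-twist on precisely those entries it moves into the corner; the error matrix is then supported near the seam with entries bounded by $k/(m+1)$, giving the $K/n<\epsilon$ bound. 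In your notation this amounts to taking $\psi_n(x)=h_0(P_nxP_n)h_0$ for a single tent-shaped $h_0$ vanishing at the seam, with $h_1(i)=h_0(i-m)$ appearing only implicitly through the rotation, rather than summing both compressions. Your closing remark that nuclearity of $A$ is not really needed in this lemma is essentially correct (amenability of $\mathbb{Z}$ already gives full $=$ reduced), but it does not affect the gap described above.
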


\begin{proof}
Assume $A\subseteq\mathcal{B}(H)$ for some Hilbert space $H$.
By nuclearity of $A$ we have $A\rtimes_\alpha\mathbb{Z}=A\rtimes_{\alpha,r}\mathbb{Z}$ (\cite[Theorem 4.2.4]{BO08}) and may therefore identify the crossed product with the $C^*$-subalgebra of $\mathcal{B}(\ell^2(\mathbb{Z})\otimes H)$ generated by the operators

\[\begin{array}{rclr}
   a(\delta_i\otimes\xi)&=&\delta_i\otimes(\alpha^{-i}(a))(\xi),&a\in A \\
   U(\delta_i\otimes\xi)&=&\delta_{i+1}\otimes\xi
  \end{array}\]
respectively by the ideal therein generated by $A$ in the non-unital case.

Denote by $P_n$ the projection onto the subspace $\ell^2(\{1,...,n\})\otimes H$.
Compression by $P_n$ gives a c.p.c. map from $A\rtimes_\alpha\mathbb{Z}$ to $M_n(A)$.
We need to put suitable weights on the entries of the elements of $\{P_nxP_n\}_{x\in\mathcal{G}}$. 
As in \cite{WZ10}, this will be done by Schur multiplication (i.e. entrywise multiplication) with a suitable positive contraction $\kappa_n\in M_n(\mathbb{C})$.
We choose 

\[\kappa_n=\frac{1}{m+1}\begin{pmatrix}
1 & 1 & \cdots & 1 & 1 & 1 & 1 & \cdots & 1 & 1 \\
1 & 2 & \cdots & 2 & 2 & 2 & 2 & \cdots & 2 & 1 \\
\vdots & \vdots & \ddots & \vdots & \vdots & \vdots & \vdots & \iddots & \vdots & \vdots \\
1 & 2 & \cdots & m-1 & m-1 & m-1 & m-1 & \cdots & 2 & 1 \\
1 & 2 & \cdots & m-1 & m & m & m-1 & \cdots & 2 & 1 \\
1 & 2 & \cdots & m-1 & m & m & m-1 & \cdots & 2 & 1 \\
1 & 2 & \cdots & m-1 & m-1 & m-1 & m-1 & \cdots & 2 & 1 \\
\vdots & \vdots & \iddots & \vdots & \vdots & \vdots & \vdots & \ddots & \vdots & \vdots \\
1 & 2 & \cdots & 2 & 2 & 2 & 2 & \cdots & 2 & 1 \\
1 & 1 & \cdots & 1 & 1 & 1 & 1 & \cdots & 1 & 1 
\end{pmatrix}\]
if $n=2m$ is even, respectively

\[\kappa_n=\frac{1}{m+1}\begin{pmatrix}
1 & 1 & \cdots & 1 & 1 & 1 & \cdots & 1 & 1 \\
1 & 2 & \cdots & 2 & 2 & 2 & \cdots & 2 & 1 \\
\vdots & \vdots & \ddots & \vdots & \vdots & \vdots & \iddots & \vdots & \vdots \\
1 & 2 & \cdots & m & m & m & \cdots & 2 & 1 \\
1 & 2 & \cdots & m & m+1 & m & \cdots & 2 & 1 \\
1 & 2 & \cdots & m & m & m & \cdots & 2 & 1 \\
\vdots & \vdots & \iddots & \vdots & \vdots & \vdots & \ddots & \vdots & \vdots \\
1 & 2 & \cdots & 2 & 2 & 2 & \cdots & 2 & 1 \\
1 & 1 & \cdots & 1 & 1 & 1 & \cdots & 1 & 1 
\end{pmatrix}\]
if $n=2m+1$ is odd.
It is not hard to see that the map on $M_n(A)$ given by $y\mapsto\kappa_n\ast y$, where $\ast$ denotes Schur multiplication, is then in fact completely positive and contractive. 
Therefore, $\psi_n$ defined as
\[\begin{array}{cc}\psi_n\colon A\rtimes_\alpha\mathbb{Z}\rightarrow M_n(A),&x\mapsto\kappa_n\ast(P_nxP_n)\end{array}\]
is also a c.p.c. map.

Now let $j\colon M_n(A)\rightarrow M_n(A\rtimes_\alpha\mathbb{Z})$ be the canonical embedding. Using the unitary element

\[V:=\begin{pmatrix}
      0 & \cdots & \cdots & 0 & U^{n-1} \\
                  U^{-1} & \ddots &&& 0 \\
                  0 & \ddots & \ddots & & \vdots \\
                  \vdots & \ddots & \ddots & \ddots & \vdots \\
                  0 & \cdots & 0 & U^{-1} &0
     \end{pmatrix}\in M_n(\mathcal{M}(A\rtimes_\alpha\mathbb{Z})),\] 
we consider the $^*$-homomorphisms $\Lambda_n^0,\Lambda_n^1\colon M_n(A)\rightarrow M_n(A\rtimes_\alpha\mathbb{Z})$ given by $\Lambda_n^0=j$ and $\Lambda_n^1=Ad\left(V^m\right)\circ j$.\\

We may assume that the elements of $\mathcal{G}$ are of the form $x=a_xU^{k_x}$ for suitable contractions $a_x\in A$ and $k_x\in\mathbb{Z}$, let $K:=\max_{x\in\mathcal{G}}|k_x|$.
We claim that, for $n>\frac{K}{\epsilon}$, the maps $\Lambda_n^0,\Lambda_n^1$ and $\psi_n$ defined above provide an approximation of $\iota_n$ as claimed.
For convenience we only consider the case of even $n$, the calculations for the odd case are essentially identical.
Given $x=aU^{k}\in\mathcal{G}$ with $k\geq 0$, one checks that $\iota_n(x)-((\Lambda_n^0+\Lambda_n^1)\circ\psi_n)(x)$ equals
\[\frac{1}{(\frac{n}{2}+1)}\begin{pmatrix}
0&0&0&k\cdot I_k-\mu_k \\ k\cdot I_{(\frac{n}{2}-k)} &0&0&0\\ 0& k\cdot I_k - \mu_k &0&0 \\0&0&k\cdot I_{(\frac{n}{2}-k)}&0
\end{pmatrix}\ast\iota_n(x)\]
where 
\[\mu_k=\begin{pmatrix}
0 & 0 & \cdots &&& \cdots & 0 \\
0 & 1 & 0 &&&& \vdots \\
\vdots & 0 & 2 & \ddots \\
&&\ddots & \ddots & \ddots \\
&&&\ddots & 2 & 0 & \vdots \\
\vdots &&&&0&1&0\\
0&\cdots&&&\cdots&0&0\end{pmatrix}\in M_k(\mathbb{C}),\]
$I_d$ is the $d\times d$ unit matrix and $\ast$ denotes again Schur multiplication. 
Due to the special off-diagonal form of these elements, an entrywise norm estimate immediately shows
\[\|\iota_n(x)-((\Lambda_n^0+\Lambda_n^1)\circ\psi_n)(x)\|\leq\frac{k}{(\frac{n}{2}+1)}\|x\|\leq\frac{K}{n}<\epsilon.\]
\end{proof}

Next, we show that the maps $\iota_n$ of \ref{iota} admit decomposable approximations in the sense of Definition \ref{def decomposition} provided that the coefficient algebra has finite nuclear dimension.

\begin{proposition}\label{approximation2}
 Given a $C^*$-algebra $A$ with $\nucdim(A)=d<\infty$, a finite subset $\mathcal{G}\subset A$ and $\epsilon>0$, the maps $\iota_n\colon A\rtimes_\alpha\mathbb{Z}\rightarrow M_n(A\rtimes_\alpha\mathbb{Z})$ defined in \ref{iota} admit a piecewise contractive, completely positive, $(2d+1)$-decomposable $(\mathcal{G},\epsilon)$-approximation for all sufficiently large $n$. 
\end{proposition}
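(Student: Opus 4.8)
The plan is to feed the `cut and paste' decomposition of Lemma~\ref{approximation1} into the hypothesis $\nucdim(A)=d$, exploiting that $\nucdim(M_n(A))=\nucdim(A)=d$ (a standard permanence property of nuclear dimension, \cite{WZ10}), and then to handle the two-term reconstruction map $\Lambda_n^0+\Lambda_n^1$ by doubling the finite-dimensional model. Since finite nuclear dimension entails nuclearity, Lemma~\ref{approximation1} applies: fix $\epsilon'>0$ with $\epsilon'<\epsilon/2$; then for all sufficiently large $n$ there are a c.c.p.\ map $\psi_n\colon A\rtimes_\alpha\mathbb{Z}\to M_n(A)$ and $\ast$-homomorphisms $\Lambda_n^0,\Lambda_n^1\colon M_n(A)\to M_n(A\rtimes_\alpha\mathbb{Z})$ with $\|\iota_n(x)-(\Lambda_n^0+\Lambda_n^1)(\psi_n(x))\|<\epsilon'$ for all $x\in\mathcal{G}$. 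It thus suffices to produce, for such an $n$, a piecewise contractive, c.p., $(2d+1)$-decomposable approximation of the c.p.\ map $(\Lambda_n^0+\Lambda_n^1)\circ\psi_n$ which is accurate to within $\epsilon-\epsilon'$ on $\mathcal{G}$.

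Fix $n$ as above and set $\mathcal{G}':=\{\psi_n(x):x\in\mathcal{G}\}$, a finite subset of $M_n(A)$. Using $\nucdim(M_n(A))=d$, choose a piecewise contractive, c.p., $d$-decomposable $(\mathcal{G}',\delta)$-approximation $(F,\psi',\varphi')$ of $\id_{M_n(A)}$, with $F=F^{(0)}\oplus\cdots\oplus F^{(d)}$ and each $\varphi'|_{F^{(i)}}$ a c.p.c.\ order zero map, where $\delta>0$ is small enough that $\epsilon'+2\delta<\epsilon$. Now take $F\oplus F$ as the finite-dimensional model for $\iota_n$ and define
\[
\psi\colon A\rtimes_\alpha\mathbb{Z}\to F\oplus F,\quad x\mapsto\bigl(\psi'(\psi_n(x)),\,\psi'(\psi_n(x))\bigr),
\]
\[
\varphi\colon F\oplus F\to M_n(A\rtimes_\alpha\mathbb{Z}),\quad (y_1,y_2)\mapsto\Lambda_n^0(\varphi'(y_1))+\Lambda_n^1(\varphi'(y_2)).
\]
Here $\psi$ is c.p.\ (it is $\psi'\circ\psi_n$ followed by the diagonal $\ast$-homomorphism $F\to F\oplus F$) and contractive (as $\psi_n$ and $\psi'$ are), while $\varphi$ is c.p.\ (it is a sum of compositions of c.p.\ maps with the coordinate projections $F\oplus F\to F$). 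Since $\varphi\circ\psi=(\Lambda_n^0+\Lambda_n^1)\circ\varphi'\circ\psi'\circ\psi_n$ and $\|\Lambda_n^0+\Lambda_n^1\|\le2$, one gets for $x\in\mathcal{G}$, using $\psi_n(x)\in\mathcal{G}'$,
\[
\|\iota_n(x)-\varphi(\psi(x))\|\le\|\iota_n(x)-(\Lambda_n^0+\Lambda_n^1)(\psi_n(x))\|+2\,\|\psi_n(x)-\varphi'(\psi'(\psi_n(x)))\|<\epsilon'+2\delta<\epsilon.
\]

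Finally, decompose $F\oplus F$ into the $2d+2$ summands $(F\oplus F)^{(i)}:=F^{(i)}\oplus 0$ for $i=0,\dots,d$ and $(F\oplus F)^{(d+1+i)}:=0\oplus F^{(i)}$ for $i=0,\dots,d$. On a summand of the first batch $\varphi$ restricts to $\Lambda_n^0\circ(\varphi'|_{F^{(i)}})$, and on one of the second batch to $\Lambda_n^1\circ(\varphi'|_{F^{(i)}})$; as $\Lambda_n^0$ and $\Lambda_n^1$ are $\ast$-homomorphisms, post-composing them with a c.p.c.\ order zero map again yields a c.p.c.\ order zero map, so $\varphi$ is $(2d+1)$-decomposable. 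Hence $(F\oplus F,\psi,\varphi)$ is the required approximation. The proposition is in this sense an assembly of already-established facts, and the only point at which the factor $2$ (and thus the bound $2d+1$ rather than $d$) is forced is that the reconstruction map is the \emph{sum} $\Lambda_n^0+\Lambda_n^1$ of two $\ast$-homomorphisms: a sum of two order zero maps need not be order zero, so the $i$-th pieces of the two copies of $F$ cannot be amalgamated and the model genuinely has to be doubled. (The bookkeeping for each $\Lambda_n^j$ in isolation could also be organized through Lemma~\ref{permanence}(1), but the passage from $\Lambda_n^0\oplus\Lambda_n^1$ to $\Lambda_n^0+\Lambda_n^1$ is not covered by it, which is exactly why the explicit model above is used.)
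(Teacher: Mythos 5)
Your proposal is correct and follows essentially the same route as the paper: combine the Lemma~\ref{approximation1} approximation of $\iota_n$ with a $d$-decomposable approximation of $\id_{M_n(A)}$ on $\psi_n(\mathcal{G})$, double the finite-dimensional algebra, and post-compose the two copies with $\Lambda_n^0$ and $\Lambda_n^1$ respectively, noting that a $\ast$-homomorphism composed with a c.p.c.\ order zero map is again c.p.c.\ order zero. The only difference from the paper is the bookkeeping of tolerances ($\epsilon'+2\delta<\epsilon$ versus the paper's uniform $\epsilon/3$), which is immaterial.
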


\begin{proof}
 Choose $n$ large enough such that there exist maps $\psi_n$ and $\Lambda_n^0,\Lambda_n^1$ as in Lemma \ref{approximation1} satisfying $\|\iota_n(x)-\left((\Lambda_n^0+\Lambda_n^1)\circ\psi_n\right)(x)\|<\frac{\epsilon}{3}$ for all $x\in\mathcal{G}$.
 Since $\nucdim(M_n(A))=\nucdim(A)=d$ by \cite[Corollary 2.8]{WZ10}, there exists a piecewise contractive, completely positive, $d$-decomposable $(\psi_n(\mathcal{G}),\frac{\epsilon}{3})$-approximation of $\id_{M_n(A)}$,
 i.e. a finite-dimensional $C^*$-algebra $F=F^{(0)}\oplus...\oplus F^{(d)}$, a c.p.c. map $\psi'\colon M_n(A)\rightarrow F$ and c.p.c. order zero maps $\varphi'^{(i)}\colon F^{(i)}\rightarrow M_n(A)$, $i=0,...,d$, such that 
 \[\left\|\left(\sum_{i=0}^d\varphi'^{(i)}\circ\psi'\right)(\psi_n(x))-\psi_n(x)\right\|<\frac{\epsilon}{3}\] holds for all $x\in\mathcal{G}$.
 Putting these two approximations together, i.e. setting $G:=F\oplus F$ with decomposition $G=\bigoplus_{j=0,1}\bigoplus_{i=0}^d G^{(i,j)}$, where $G^{(i,j)}:=F^{(i)}$,
and considering
 \[\begin{xy}\xymatrix{
 A\rtimes_\alpha\mathbb{Z} \ar[rr]^{\iota_n} \ar[dr]_(0.4){(\psi'\circ\psi_n)\oplus(\psi'\circ\psi_n)\;\;\;\;\;} && M_n(A\rtimes_\alpha\mathbb{Z}) \\
& G\oplus G=\bigoplus\limits_{j=0,1}\;\bigoplus\limits_{i=0}^d G^{(i,j)} \ar[ur]_(0.7){\sum\limits_{j=0,1}\;\sum\limits_{i=0}^d\left(\Lambda_n^j\circ\varphi'^{(i)}\right)}
 }\end{xy},\]
 one finds each $\Lambda_n^j\circ\varphi'^{(i)}$ to be an order zero map and further
 \[\begin{array}{rl}&\left\|\sum\limits_{j=0}^1\sum\limits_{i=0}^d\left(\Lambda_n^j\circ\varphi'^{(i)}\right)((\psi'\circ\psi_n)(x))-\iota_n(x)\right\| \\ \\
    \leq & \left\|\sum\limits_{j=0}^1\Lambda_n^j(\psi_n(x))-\iota_n(x)\right\| +2\left\|\left(\sum\limits_{i=0}^d\varphi'^{(i)}\circ\psi'\right)(\psi_n(x))-\psi_n(x)\right\| \\ \\ < & \epsilon
\end{array}\]
 for all $x\in\mathcal{G}$.  
 In other words, $\left(G,(\psi'\circ\psi_n)^{\oplus 2},\sum_{i,j}\left(\Lambda_n^j\circ\varphi'^{(i)}\right)\right)$ provides a piecewise contractive, completely positive, $(2d+1)$-decomposable $(\mathcal{G},\epsilon)$-approximation for $\iota_n$.
\end{proof}

\section{The $K$-theory of $\iota_n$}\label{section k-theory}

In this section we compute the map on $K$-theory induced by $\iota_n\colon A\rtimes_\alpha\mathbb{Z}\rightarrow M_n(A\rtimes_\alpha\mathbb{Z})$ from \ref{iota}.
Using a rotation argument we first express $K_*(\iota_n)$ in terms of $K_*(\alpha)$. 
Under suitable assumptions, this allows us to read off $K_*(\iota_n)$ from the Pimsner-Voiculescu sequence associated to $(A,\alpha)$.

\begin{lemma}\label{homotopy}
 The homomorphism $\iota_n\colon A\rtimes_\alpha\mathbb{Z}\rightarrow M_n(A\rtimes_\alpha\mathbb{Z})$ defined in \ref{iota} is homotopic to the diagonal embedding $j_n\colon A\rtimes_\alpha\mathbb{Z}\rightarrow M_n(A\rtimes_\alpha\mathbb{Z})$ given by
 \[\begin{tabular}{ccc}$a \mapsto\begin{pmatrix}
                  \alpha^{-1}(a) & 0 & \cdots& 0 \\  0 & \alpha^{-2}(a) & \ddots& \vdots\\ \vdots & \ddots& \ddots& 0 \\  0 & \cdots& 0 &  \alpha^{-n}(a)
                 \end{pmatrix}$
& and
& $U \mapsto\begin{pmatrix}
                  U \\ & U \\ &&\ddots \\&&&U
                 \end{pmatrix}$\end{tabular}\]
for $a\in A$ resp. for the unitary $U\in\mathcal{M}(A\rtimes_\alpha\mathbb{Z})$ implementing the action $\alpha$.
\end{lemma}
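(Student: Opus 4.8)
The plan is to write down an explicit path of unitaries in $M_n(\mathcal{M}(A\rtimes_\alpha\mathbb{Z}))$ that conjugates the permutation-type unitary appearing in $\iota_n(U)$ to the diagonal unitary $\diag(U,\dots,U)$ appearing in $j_n(U)$, while fixing the image of $A$, and then to upgrade this conjugation to a genuine homotopy of $\ast$-homomorphisms.

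First I would observe that both $\iota_n$ and $j_n$ restrict to the \emph{same} diagonal embedding $a\mapsto\diag(\alpha^{-1}(a),\dots,\alpha^{-n}(a))$ on $A$; only the behaviour on $U$ differs. Write $W_n$ for the cyclic-shift-with-twist unitary $\iota_n(U)$ and $D_n=\diag(U,\dots,U)=j_n(U)$. The key algebraic fact is that $W_n$ and $D_n$ are unitarily equivalent inside $M_n(\mathcal{M}(A\rtimes_\alpha\mathbb{Z}))$ by a unitary that also commutes with the diagonal image of $A$: concretely, using the unitary $U\in\mathcal{M}(A\rtimes_\alpha\mathbb{Z})$ one builds a diagonal unitary $Z_n=\diag(U^{n-1},U^{n-2},\dots,U,1)$ (closely related to the matrix $V$ from the proof of Lemma~\ref{approximation1}), and one checks by a direct matrix computation that $Z_n^*W_nZ_n$ is the bare cyclic permutation matrix times a single copy of $U^n$ in one corner — and more to the point that a further conjugation untwists this to $D_n$. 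In fact the cleanest route is: the bare cyclic permutation unitary $C_n$ (entries $1$ on the subdiagonal, $U$-free, with a $1$ in the top-right corner) is homotopic to $I_n$ through unitaries in $M_n(\mathbb{C})\subset M_n(\mathcal{M})$, since $C_n$ has spectrum the $n$-th roots of unity and lies in the connected group $U(n)$; conjugating $W_n$ appropriately reduces the comparison of $W_n$ with $D_n$ to the comparison of $C_n$ with $I_n$, which is handled by this finite-dimensional path.

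Concretely I would proceed in three steps. \textbf{Step 1:} Exhibit a unitary $Z\in M_n(\mathcal{M}(A\rtimes_\alpha\mathbb{Z}))$, diagonal with entries powers of $U$, such that $\Ad(Z)$ fixes $j_n(a)$ for all $a\in A$ (diagonal unitaries commute with diagonal matrices, but one must match the powers so that conjugating $D_n$ or $W_n$ does not disturb the $\alpha^{-i}(a)$ pattern — here one uses $U^k\alpha^{-i}(a)U^{-k}=\alpha^{-i+k}(a)$, so the shift in powers of $U$ exactly compensates the index shift caused by the cyclic permutation) and such that $\Ad(Z)(W_n)=C_n\cdot\diag(U,\dots,U)$ in a form where the $U$-part is now diagonal and the remaining combinatorial part $C_n$ is a scalar permutation unitary. \textbf{Step 2:} Choose a norm-continuous path $(C_n(t))_{t\in[0,1]}$ of unitaries in $U(n)\subset M_n(\mathbb{C})$ with $C_n(0)=C_n$ and $C_n(1)=I_n$; since $M_n(\mathbb{C})1\subseteq M_n(\mathcal{M}(A\rtimes_\alpha\mathbb{Z}))$ centrally, the family $\Ad(Z)^{-1}\bigl(C_n(t)\,\diag(U,\dots,U)\bigr)$ together with the fixed image of $A$ defines, for each $t$, a unitary and hence (by the universal property of the crossed product, or simply because these are the defining generators subject only to the covariance relation, which is preserved throughout) a $\ast$-homomorphism $h_t\colon A\rtimes_\alpha\mathbb{Z}\to M_n(A\rtimes_\alpha\mathbb{Z})$; one must check that the covariance relation $h_t(U)h_t(a)h_t(U)^*=h_t(\alpha(a))$ holds for every $t$, which follows because $C_n(t)$ commutes with the image of $A$ (that image being $\diag(\alpha^{-1}(a),\dots)$, a matrix over $\mathbb{C}\cdot 1$-combinations? — no: here one genuinely needs that the scalar matrix $C_n(t)$ commutes with the diagonal matrix $j_n(a)$, which fails in general unless $j_n(a)$ is a scalar multiple of the identity; so the correct statement is that one conjugates back by $Z$ which restores things, and the covariance relation is checked for the conjugated path). \textbf{Step 3:} Conclude $h_0=\iota_n$ and $h_1=j_n$ after absorbing the conjugation by $Z$ (which is itself an inner perturbation, hence homotopic to the identity automorphism of $M_n(A\rtimes_\alpha\mathbb{Z})$ if needed, or can be arranged to cancel), and check $t\mapsto h_t(x)$ is norm-continuous for each fixed $x$ (enough to check on the generators $a$ and $U$, where continuity is manifest).

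The main obstacle I anticipate is bookkeeping in Step 1: getting the powers of $U$ in the conjugating unitary $Z$ exactly right so that (i) conjugation by $Z$ leaves the diagonal image $a\mapsto\diag(\alpha^{-1}(a),\dots,\alpha^{-n}(a))$ genuinely invariant, and (ii) the twisted cyclic unitary $W_n$ really does become (up to the diagonal $U$-part) a \emph{scalar} permutation matrix that one can then deform inside $U(n)$. The relation $U^k\alpha^{j}(a)U^{-k}=\alpha^{j+k}(a)$ is what makes this work, but one has to be careful that the index arithmetic is compatible with the cyclic (mod $n$) nature of the permutation — in particular the single $U^n$ in the corner of $W_n$ is exactly the discrepancy needed so that going once around the cycle multiplies by $\alpha^{\pm n}$, matching $\alpha^{-1}(a),\dots,\alpha^{-n}(a)$ cyclically. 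Once this combinatorial identity is verified, the homotopy itself is soft, coming entirely from connectedness of $U(n)$.
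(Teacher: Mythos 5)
Your overall strategy does work, and once unwound it is essentially the paper's own proof: conjugating your path $t\mapsto C_n(t)\,\diag(U,\dots,U)$ back by $Z_n=\diag(U^{n-1},\dots,U,1)$ produces unitaries whose $(i,j)$-entry is $(C_n(t))_{ij}\,U^{j-i+1}$, i.e.\ the Schur product of a scalar unitary path (from the cyclic shift to $1_n$, which exists by connectedness of $U(n)$) with the fixed matrix $(U^{j-i+1})_{i,j}$. That Schur-product path is exactly the homotopy $\kappa_t$ the paper writes down directly, and the paper settles the only nontrivial point -- covariance of the pair at every $t$ -- in one line from $\sum_k (V_t)_{ik}\overline{(V_t)_{jk}}=\delta_{ij}$.

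The genuine flaw in your write-up is the assertion in Step 1 that $\Ad(Z)$ fixes the diagonal copy of $A$: it does not, and no choice of powers makes it do so. What actually happens, with $Z_n=\diag(U^{n-1},\dots,U,1)$, is that $U^{-(n-i)}\alpha^{-i}(a)U^{\,n-i}=\alpha^{-n}(a)$, so $\Ad(Z_n^*)$ carries $\diag(\alpha^{-1}(a),\dots,\alpha^{-n}(a))$ to the \emph{constant} diagonal $\alpha^{-n}(a)\otimes 1_n$, while carrying $\iota_n(U)$ to $C_n\cdot\diag(U,\dots,U)$. This is precisely what rescues the worry you raise (and leave unresolved) in Step 2: in the conjugated picture the image of $A$ commutes with all scalar matrices, so for every $t$ one gets a covariant pair, since $h_t(U)h_t(a)h_t(U)^*=C_n(t)\bigl(\alpha^{1-n}(a)\otimes 1_n\bigr)C_n(t)^*=\alpha^{-n}(\alpha(a))\otimes 1_n=h_t(\alpha(a))$, and hence a homomorphism of the full crossed product by universality. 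Conjugating the entire path by the fixed unitary $Z_n$ then gives a homotopy whose endpoints are exactly $\iota_n$ and $j_n$, so your Step 3 manoeuvre of ``absorbing'' the conjugation or invoking that inner perturbations are homotopic to the identity is unnecessary. As written, though, your Steps 1--2 never actually verify covariance along the path -- which is the entire content of the lemma -- and rest on a false commutation claim; with the correction above the argument closes and coincides with the paper's.
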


\begin{proof}
 Let $(V_t)_{t\in[0,1]}$ be a path of unitaries in $M_n(\mathbb{C})$ connecting the shift-unitary $V_0=\sum_{i=1}^{n-1} e_{i,i+1}+e_{n,1}$ to identity matrix $V_1=1_n$ and consider the path $(\kappa_t)_{t\in[0,1]}$ of homomorphisms $\kappa_t\colon A\rtimes_\alpha\mathbb{Z}\rightarrow M_n(A\rtimes_\alpha\mathbb{Z})$ given by
  
  \[\begin{tabular}{ccc}$a \mapsto\begin{pmatrix}
                  \alpha^{-1}(a) & 0 & \cdots& 0 \\  0 & \alpha^{-2}(a) & \ddots& \vdots\\ \vdots & \ddots& \ddots& 0 \\  0 & \cdots& 0 &  \alpha^{-n}(a)
                 \end{pmatrix}$
& and
& $U \mapsto V_t\ast\begin{pmatrix}
                  U & U^2 & \cdots & U^n \\
                  1 & U & \ddots & \vdots \\
                  \vdots & \ddots & \ddots & U^2 \\
                  U^{-n+2} & \cdots & 1 & U
                 \end{pmatrix}$\end{tabular}\]
where $\ast$ denotes Schur multiplication.
It is straightforward to check that $\kappa_t(U)$ is a continuous unitary path in $M_n(\mathcal{M}(A\rtimes_\alpha\mathbb{Z}))$, hence we only have to make sure that each $\kappa_t$ is well-defined, i.e. compatible with the action $\alpha$:
\[\begin{array}{rl}
   &\left(\kappa_t(U)\kappa_t(a)\kappa_t(U)^*\right)_{ij} \\
   = & \sum_{k=1}^n \kappa_t(U)_{ik}\alpha^{-k}(a)\kappa_t(U^*)_{kj} \\
   = & \sum_{k=1}^n (V_t)_{ik}\overline{(V_t)_{jk}}U^{(k-i+1)}\alpha^{-k}(a)U^{-(k-j+1)} \\
   = & \left(\sum_{k=1}^n (V_t)_{ik}\overline{(V_t)_{jk}}\right) U^{-i+1}aU^{j-1} \\
   = & \delta_{ij}\cdot U^{-i+1}aU^{j-1} \\
   = & \delta_{ij}\cdot \alpha^{-i+1}(a) \\
   = & \left(\kappa_t(\alpha(a))\right)_{ij}
  \end{array}\]
This shows that $\kappa_t$ provides a homotopy between $\kappa_0=\iota_n$ and $\kappa_1=j_n$.
\end{proof}

\begin{proposition}\label{k-theory}
Given $n\in\mathbb{N}$, let $\iota_n\colon A\rtimes_\alpha\mathbb{Z}\rightarrow M_n(A\rtimes_\alpha\mathbb{Z})$ be the homomorphism defined in \ref{iota}.
If both boundary maps in the Pimsner-Voiculescu sequence associated to $(A,\alpha)$ vanish, we find $K_*(\iota_n)=n\cdot \id_{K_*(A\rtimes_\alpha\mathbb{Z})}$.
The same conclusion holds if instead either $K_0(A)=0$ or $K_1(A)=0$.
\end{proposition}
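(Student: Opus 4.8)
The plan is to use Lemma \ref{homotopy} to replace $\iota_n$ by the diagonal embedding $j_n$, to recognize $j_n$ as an orthogonal sum of automorphisms of the crossed product, and then to pin down the induced $K$-theory map from the Pimsner-Voiculescu sequence of $(A,\alpha)$. Concretely, let $\bar\alpha\in\Aut(A\rtimes_\alpha\mathbb{Z})$ be the automorphism with $\bar\alpha(a)=\alpha(a)$ for $a\in A$ and $\bar\alpha(U)=U$; it is the automorphism functorially induced on the crossed product by the automorphism $\alpha$ of the $\mathbb{Z}$-$C^*$-system $(A,\alpha)$ (which is equivariant, since it commutes with $\alpha$). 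Comparing the prescriptions on generators shows that the diagonal embedding of Lemma \ref{homotopy} is the orthogonal sum $j_n=\bar\alpha^{-1}\oplus\cdots\oplus\bar\alpha^{-n}\colon A\rtimes_\alpha\mathbb{Z}\to M_n(A\rtimes_\alpha\mathbb{Z})$. Since $K$-theory is additive on orthogonal sums and $K_*(\iota_n)=K_*(j_n)$ by Lemma \ref{homotopy}, we get $K_*(\iota_n)=\sum_{k=1}^{n}K_*(\bar\alpha)^{-k}$ under the standard identification $K_*(M_n(A\rtimes_\alpha\mathbb{Z}))\cong K_*(A\rtimes_\alpha\mathbb{Z})$, and the entire statement reduces to the single assertion $K_*(\bar\alpha)=\id_{K_*(A\rtimes_\alpha\mathbb{Z})}$.

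To establish that assertion I would invoke the naturality of the Pimsner-Voiculescu sequence applied to the equivariant automorphism $\alpha$ of $(A,\alpha)$: it supplies a commuting ladder of six-term sequences in which the columns over the groups $K_*(A)$ are $\alpha_*$ and the column over $K_*(A\rtimes_\alpha\mathbb{Z})$ is $K_*(\bar\alpha)$. Write $\theta_*\colon K_*(A)\to K_*(A\rtimes_\alpha\mathbb{Z})$ for the map induced by the inclusion $A\hookrightarrow A\rtimes_\alpha\mathbb{Z}$ and $\partial$ for the PV boundary map. Two consequences follow at once. First, $K_*(\bar\alpha)$ restricts to the identity on $\image\theta_*$: from the naturality square $\theta_*\circ\alpha_*=K_*(\bar\alpha)\circ\theta_*$ together with the exactness relation $\theta_*\circ(\id-\alpha_*)=0$ one gets $K_*(\bar\alpha)\circ\theta_*=\theta_*$. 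Second, $\partial\circ K_*(\bar\alpha)=\alpha_*\circ\partial=\partial$, because by exactness $\image\partial=\kernel(\id-\alpha_*)$, on which $\alpha_*$ acts as the identity.

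It remains to combine these two facts with the hypotheses. If both PV boundary maps vanish, then $\theta_*$ is surjective in both degrees, and the first fact gives $K_*(\bar\alpha)=\id$. If $K_1(A)=0$, then $\theta_0$ is surjective (its cokernel embeds into $K_1(A)=0$), so the first fact gives $K_0(\bar\alpha)=\id$; and the boundary map $\partial\colon K_1(A\rtimes_\alpha\mathbb{Z})\to K_0(A)$ is injective (its kernel equals $\image\theta_1$, and $\theta_1$ has domain $K_1(A)=0$), so the relation $\partial\circ K_1(\bar\alpha)=\partial$ from the second fact forces $K_1(\bar\alpha)=\id$. The case $K_0(A)=0$ is symmetric, with the roles of the two degrees interchanged. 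In every case $K_*(\bar\alpha)=\id$, and hence $K_*(\iota_n)=\sum_{k=1}^{n}\id=n\cdot\id_{K_*(A\rtimes_\alpha\mathbb{Z})}$.

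The step I expect to be the real obstacle is the last one. The naturality ladder by itself only shows that $K_*(\bar\alpha)$ agrees with the identity on $\image\theta_*$ and becomes the identity after composing with $\partial$; since $\kernel\partial=\image\theta_*$, this forces no more than $(K_*(\bar\alpha)-\id)^2=0$, and some genuine extra input is needed to conclude $K_*(\bar\alpha)=\id$. The three hypotheses are exactly the cases in which, in each degree, either $\theta_*$ is surjective or $\partial$ is injective, which is what closes the gap. The remaining points are routine but should be checked: that $\bar\alpha$ is well defined (including when $A$ is non-unital and $U$ lives only in the multiplier algebra), that $j_n$ is indeed the claimed orthogonal sum, and that the naturality of the PV sequence is applied in the correct variance so that $\alpha$ induces $\bar\alpha$ (and not $\bar\alpha^{-1}$) on the crossed product.
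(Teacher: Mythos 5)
Your proposal is correct and follows essentially the same route as the paper: reduce via the homotopy lemma to the diagonal embedding, identify it as a sum of powers of the extended automorphism $\hat\alpha$ (your $\bar\alpha$), and then use naturality of the Pimsner--Voiculescu sequence, with surjectivity of the inclusion-induced map resp.\ injectivity of the boundary map in each degree, to force $K_*(\hat\alpha)=\id$. The case analysis you give matches the paper's proof step for step.
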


\begin{proof}
We know by Lemma \ref{homotopy} that $(\iota_n)_*=\sum_{i=1}^n (\widehat{\alpha^i})_*$ where $\hat{\beta}$ denotes the canonical extension of an automorphism $\beta\in\Aut(A)$ satisfying $\alpha\circ\beta=\beta\circ\alpha$ to an automorphism of $A\rtimes_\alpha\mathbb{Z}$ via $\widehat{\beta}(U)=U$. 
Using $(\widehat{\alpha^i})_*=(\widehat{\alpha}_*)^i$, it suffices to show that $\widehat{\alpha}$ induces the identity map on $K$-theory.
By the Pimsner-Voiculescu sequence (\cite[Theorem 10.2.1]{Bla98}) and the naturality thereof, we have the following commutative diagram with exact rows
\[\begin{xy}\xymatrix{
\ar[r] & K_*(A) \ar[r]^{1-\alpha_*} \ar[d]^{\alpha_*} & K_*(A) \ar[r]^(.4){i_*} \ar[d]^{\alpha_*} & K_*(A\rtimes_\alpha\mathbb{Z}) \ar[r]^{\partial_*} \ar[d]^{\widehat{\alpha}_*} & K_{*+1}(A) \ar[r]^{1-\alpha_{*+1}} \ar[d]^{\alpha_{*+1}} & K_{*+1}(A) \ar[r] \ar[d]^{\alpha_{*+1}} & \\
\ar[r] & K_*(A) \ar[r]^{1-\alpha_*} & K_*(A) \ar[r]^(.4){i_*} & K_*(A\rtimes_\alpha\mathbb{Z}) \ar[r]^{\partial_*} & K_{*+1}(A) \ar[r]^{1-\alpha_{*+1}} & K_{*+1}(A) \ar[r] & }\end{xy}\]
where $i$ denotes the canonical inclusion of $A$ into the crossed product.

In the first case, i.e. $\partial_*=0$, we have an isomorphism $K_*(A)/(1-\alpha_*)K_*(A)\cong K_*(A\rtimes_\alpha\mathbb{Z})$ induced by $i_*$. 
Hence $\widehat{\alpha}_*\circ i_*=i_*\circ\alpha_*=i_*$ and the claim follows by surjectivity of $i_*$.

The second case is treated similarly. If $K_i(A)=0$, then $K_{i+1}(A\rtimes_\alpha\mathbb{Z})\cong K_{i+1}(A)/(1-\alpha_{i+1})K_{i+1}(A)$ and $\widehat{\alpha}_{i+1}=\id$ follows as in the first case.
On the other hand, the boundary map $\partial_i$ identifies $K_i(A\rtimes_\alpha\mathbb{Z})$ with $ker(1-\alpha_{i+1})\subseteq K_{i+1}(A)$. 
Hence $\partial_i\circ\widehat{\alpha}_i=\alpha_{i+1}\circ\partial_i=\partial_i$, which shows that also $\widehat{\alpha}_i=\id$.
\end{proof}

\section{Nuclear dimension of Kirchberg algebras}\label{section main}
Combining the results of section \ref{section approximation} and \ref{section k-theory} with the $K$-theoretical classification results by Kirchberg and Phillips, we obtain our main result, the computation of the nuclear dimension for UCT-Kirchberg algebras in the absence of torsion in $K_1$.

\begin{theorem}\label{main}
 Let $B$ be a Kirchberg algebra in the UCT-class. If $K_1(B)$ is torsion free, then $\nucdim(B)=1$.
\end{theorem}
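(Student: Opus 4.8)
The plan is to prove the matching bounds $\nucdim(B)\geq 1$ and $\nucdim(B)\leq 1$. The lower bound is immediate: a Kirchberg algebra is purely infinite and simple, hence not stably finite and in particular not AF, so it cannot have nuclear dimension $0$ (as nuclear dimension $0$ characterises approximately finite-dimensional algebras, \cite{WZ10}); thus $\nucdim(B)\geq 1$. For the upper bound I would first reduce to the stable case, using $\nucdim(B)=\nucdim(B\otimes\mathbb{K})$ (\cite[Corollary 2.8]{WZ10} together with its $\mathbb{K}$-stable version), so that we may assume $B$ is stable. At this point the torsion-freeness of $K_1(B)$ enters through R{\o}rdam's realisation theorem \cite{Ror95}: it allows us to write $B\cong A\rtimes_\alpha\mathbb{Z}$ for a stable AF algebra $A$ and an automorphism $\alpha$, so in particular $\nucdim(A)=0$ and $K_1(A)=0$.

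With such a crossed product model in hand, the two preceding sections become applicable to the embeddings $\iota_n\colon B\to M_n(B)$ of Definition \ref{iota}. By Proposition \ref{approximation2} with $d=0$, each $\iota_n$ admits a piecewise contractive, completely positive, $1$-decomposable $(\mathcal{G},\epsilon)$-approximation, for every finite $\mathcal{G}\subset B$ and every $\epsilon>0$, provided $n$ is large enough. On the $K$-theory side, since $K_1(A)=0$, Proposition \ref{k-theory} (via the homotopy of Lemma \ref{homotopy}) gives $K_*(\iota_n)=n\cdot\id_{K_*(B)}$; moreover, because $\widehat{\alpha}=\Ad(U)$ is inner in $\mathcal{M}(B)$ and the unitary group of $\mathcal{M}(B)$ is connected for stable $B$, one can upgrade this to $KK(\iota_n)=n\cdot 1_B$, with the same statement for all orthogonal sums and compositions of the $\iota_n$ by Lemma \ref{permanence}.

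The main step is then to combine finitely many of the $\iota_n$ into a single $\ast$-homomorphism $\Phi\colon B\to M_k(B)$ that (i) still admits a piecewise contractive, completely positive, $1$-decomposable $(\mathcal{G},\epsilon)$-approximation — guaranteed by the permanence properties in Lemma \ref{permanence}: orthogonal sums, compositions, and stability under approximate unitary equivalence — and (ii) induces an isomorphism on $K$-theory, indeed an invertible (ideally trivial) class in $KK(B,B)$. Granting (ii), identify $M_k(B)\cong B$ and apply Kirchberg--Phillips classification: $\Phi$ is approximately unitarily equivalent to an automorphism $\theta$ of $B$ (to $\id_B$ itself if $KK(\Phi)=1_B$). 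Then Lemma \ref{permanence}(4) transfers the $1$-decomposable approximations from $\Phi$ to $\theta$, and Lemma \ref{permanence}(1) transfers them further from $\theta$ to $\theta^{-1}\circ\theta=\id_B$; since $\mathcal{G}$ and $\epsilon$ were arbitrary this yields $\nucdim(B)\leq 1$, and combined with the lower bound, $\nucdim(B)=1$.

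The hard part will be step (ii): the maps $\iota_n$ natively supply only the non-invertible classes $n\cdot 1_B$ (with $n\geq 2$), and no orthogonal sum or composition of them can reach an invertible class when $K_0(B)$ has torsion. Overcoming this should use the flexibility of Kirchberg--Phillips classification — realising auxiliary homomorphisms whose $KK$-classes are forced to coincide with $\iota_n$-combinations, and which therefore inherit $1$-decomposable approximations — together with $K$-theoretic bookkeeping (choosing the indices $n$ suitably coprime, or running through a telescope of such maps) to arrange that the cumulative $KK$-class is $1_B$. It is here, a second time, that the absence of torsion in $K_1(B)$ is used; everything else is formal manipulation with Lemma \ref{permanence}, Proposition \ref{approximation2}, Proposition \ref{k-theory}, and the classification theorem.
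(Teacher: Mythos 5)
Your outline matches the paper's strategy up to the decisive step, and everything you do prove (the lower bound, the reduction to the stable case, the R{\o}rdam crossed product model with $A$ AF, the application of Proposition \ref{approximation2} with $d=0$ and of Proposition \ref{k-theory} to get $K_*(\iota_n)=n\cdot\id$, and the transfer of $1$-decomposable approximations through Lemma \ref{permanence} and Kirchberg--Phillips uniqueness) is correct. But the point you defer as ``the hard part'' is precisely the content of the proof, and the hints you give for it would not succeed: orthogonal sums and compositions of the $\iota_n$ alone only produce classes acting on $K_*$ as sums and products of the integers $n\geq 2$, so no amount of coprime bookkeeping or telescoping can reach $\id$ --- you need a way to subtract, and this has nothing to do with torsion in $K_0(B)$ (already for $K_0(B)=\mathbb{Z}$ multiplication by $n\geq 2$ is not invertible). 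Also, torsion freeness of $K_1(B)$ is not used ``a second time'' here; it enters only through the AF coefficient algebra.

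The missing idea is to introduce a sign via classification: by the existence part of Kirchberg--Phillips there is an automorphism $\omega$ of $M_n(B)$ with $\omega_*=-\id$ (possible because $B$ is purely infinite, so no order structure on $K_0$ obstructs $-\id$). Choosing $n$ so large that both $\iota_n$ and $\iota_{n+1}$ admit $1$-decomposable $(\mathcal{G},\epsilon)$-approximations, one forms
\[
\iota=\begin{pmatrix}\iota_{n+1} & 0\\ 0 & \omega\circ\iota_n\end{pmatrix}\colon B\rightarrow M_{2n+1}(B),
\]
which still admits such an approximation by Lemma \ref{permanence} and satisfies $K_*(\iota)=(n+1)\cdot\id-n\cdot\id=\id$. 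The UCT then makes the class of $\iota$ invertible in $KK(B,M_{2n+1}(B))$; its inverse is realized by an isomorphism $\varrho\colon M_{2n+1}(B)\rightarrow B$, and uniqueness gives $\varrho\circ\iota\sim_{a.u.}\id_B$, whence $\id_B$ inherits the approximation. Without this (or an equivalent) construction of an invertible class from the $\iota_n$, your argument does not close; note also that there is no need to upgrade $K_*(\iota_n)=n\cdot\id$ to a statement about $KK(\iota_n)$ --- the $K$-theory computation plus the UCT suffices.
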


\begin{proof}
Since $\nucdim(B)=\nucdim(B\otimes\mathbb{K})$ by \cite[Corollary 2.8]{WZ10}, we may assume that $B$ is stable.
In this case, $B$ can be realized as a crossed product $B=A\rtimes_\alpha\mathbb{Z}$ with $A$ an AF-algebra by \cite[Corollary 8.4.11]{Ror02}.
This implies that, given any finite subset $\mathcal{G}$ of $B=A\rtimes_\alpha\mathbb{Z}$ and $\epsilon>0$, we can use Proposition \ref{approximation2} to find a natural number $n$ such that both $\iota_n\colon B\rightarrow M_n(B)$ and $\iota_{n+1}\colon B\rightarrow M_{n+1}(B)$ (as defined in \ref{iota}) admit piecewise contractive, completely positive, 1-decomposable $(\mathcal{G},\epsilon)$-approximations $(F_i,\psi_i,\varphi_i)$, $i\in\{n,n+1\}$. 
Using the existence part of Kirchberg-Phillips classification (\cite[Theorem 8.4.1]{Ror02}), we can choose an automorphism $\omega$ on $M_n(B)$ which induces $\omega_*=-\id$ on $K$-theory.
Now consider the embedding $\iota$ given by
\[\iota=\begin{pmatrix}\iota_{n+1} & 0 \\ 0 & \omega\circ\iota_n\end{pmatrix}\colon B\rightarrow M_{2n+1}(B).\]
By Lemma \ref{permanence}, $\iota$ also admits a piecewise contractive, completely positive, 1-decom\-posable $(\mathcal{G},\epsilon)$-approximation.
By Proposition \ref{k-theory} and the choice of $\omega$ we further have
\[K_*(\iota)=K_*(\iota_{n+1})+K_*(\omega)\circ K_*(\iota_n)=(n+1)\cdot\id-n\cdot\id=\id.\]
Since $B$ satisfies the UCT, this shows that the class of $\iota$ is invertible in \linebreak $KK(B,M_{2n+1}(B))$ and its inverse is, using \cite[Theorem 8.4.1]{Ror02} again, induced by a $*$-isomomorphism $\varrho\colon M_{2n+1}(B)\rightarrow B$.
The uniqueness part of Kirchberg-Phillips classification (\cite[Theorem 8.4.1]{Ror02}) implies that $\varrho\circ\iota$ is approximately unitarily equivalent to $\id_B$.
Hence, by Lemma \ref{permanence}, $\id_B$ also admits a piecewise contractive, completely positive, 1-decomposable $(\mathcal{G},\epsilon)$-approximation. 

Since $\mathcal{G}$ and $\epsilon$ were arbitrary, this shows $\nucdim(B)\leq 1$.
Because $B$ is not an AF-algebra, the nuclear dimension of $B$ must in fact be equal to 1. 
\end{proof}

For the general case, i.e. for Kirchberg algebras without any $K$-theory constraints, we immediately get the following.
Note that the same estimate has by now also be obtained in \cite[Theorem 7.1]{MS13} and \cite[Corollary 3.4]{BEMSW14} by methods completely different from the one developed here. 
These results are more general though, they do not require the UCT.

\begin{corollary}
 A Kirchberg algebra in the UCT-class has nuclear dimension at most 3.
\end{corollary}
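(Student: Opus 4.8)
The plan is to reduce the general case to the torsion-free case already established in Theorem \ref{main}, using the structure theory of Kirchberg algebras together with the permanence properties collected in Lemma \ref{permanence}. The key observation is that an arbitrary UCT-Kirchberg algebra $B$ can, after stabilizing (which does not change the nuclear dimension by \cite[Corollary 2.8]{WZ10}), be realized as a crossed product $A\rtimes_\alpha\mathbb{Z}$ with $A$ an AF-algebra, by \cite[Corollary 8.4.11]{Ror02}. For such $B$ the maps $\iota_n\colon B\to M_n(B)$ admit piecewise contractive, completely positive, $1$-decomposable $(\mathcal{G},\epsilon)$-approximations for all large $n$, by Proposition \ref{approximation2} (since $\nucdim(A)=0$). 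What fails in general is the $K$-theory computation: Proposition \ref{k-theory} only gives $K_*(\iota_n)=n\cdot\id$ when one of the boundary maps vanishes or one of the $K_i(A)$ vanishes, which need not hold when $K_*(B)$ has torsion.

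First I would fix a finite subset $\mathcal{G}\subset B$ and $\epsilon>0$, and choose $n$ large enough that $\iota_{n},\iota_{n+1},\iota_{n+2},\iota_{n+3}$ all admit $1$-decomposable $(\mathcal{G},\epsilon)$-approximations. Then I would assemble an orthogonal sum of (possibly twisted) copies of these maps, say $\iota=\bigoplus_j \omega_j\circ\iota_{n+j}$ for suitable automorphisms $\omega_j$ of matrix algebras over $B$ chosen via the existence part of Kirchberg--Phillips classification \cite[Theorem 8.4.1]{Ror02}, so that $\iota\colon B\to M_N(B)$ still admits a $1$-decomposable $(\mathcal{G},\epsilon)$-approximation by Lemma \ref{permanence}(3),(1),(4). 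The aim is to arrange that $K_*(\iota)$ is an \emph{isomorphism} on $K$-theory — not necessarily the identity, just invertible in $KK(B,M_N(B))$ — after which the UCT plus the uniqueness part of Kirchberg--Phillips classification lets us perturb $\varrho\circ\iota$ to $\id_B$ for an isomorphism $\varrho\colon M_N(B)\to B$, exactly as in the proof of Theorem \ref{main}, yielding a $1$-decomposable approximation of $\id_B$; that would give $\nucdim(B)\le 1$, which is too strong. So the argument cannot produce $1$-decomposability and one instead tolerates a larger decomposition rank.

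The honest route is therefore: take the composition $\varrho\circ\iota$ where $\iota$ is built from the $\iota_{n+j}$ as above — here $\iota$ is $1$-decomposable, but the issue is that in general we cannot force $K_*(\iota)$ to be invertible, so $\varrho$ need not exist. Instead one absorbs the defect into the decomposition rank: write $\id_B$ (up to approximate unitary equivalence and up to conjugating by an isomorphism $M_N(B)\cong B$) as an orthogonal sum of finitely many maps, each of which is either one of the $1$-decomposable $\iota_{n+j}$ or else a map that is itself approximable with some fixed finite decomposition rank. Concretely, I expect the clean statement to come from combining the bound from Theorem \ref{main}, applied not to $B$ but to an auxiliary algebra with torsion-free $K_1$, with a $KK$-theoretic splitting: every $B$ as above sits in a short exact sequence, or is $KK$-equivalent to a direct sum, relating it to a Kirchberg algebra $B'$ with $K_1(B')$ torsion-free and $K_0(B')$ free — and then use that $\nucdim$ behaves well under the relevant operations. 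Using $\nucdim(B)\le 2\cdot 1+1=3$ from gluing two $1$-decomposable pieces (the mechanism already visible in Proposition \ref{approximation2}, where two $d$-decomposable layers give $2d+1$) is exactly the expected output.

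The main obstacle is the $K$-theory bookkeeping: with torsion present, one cannot simply choose $\omega$ with $\omega_*=-\id$ and cancel, because $n+1$ and $n$ are coprime integers and their multiples generate all of $\mathbb{Z}$ acting on a torsion group only if the torsion is handled separately. I would therefore expect the real content to be: realize $\id_{K_*(B)}$ as an integer combination $\sum_j c_j\,(K_*(\iota_{n+j}))=\sum_j c_j(n+j)\cdot\id$ — which is possible since consecutive integers $n,n+1$ already satisfy $(n+1)-n=1$ — and implement the signs $c_j\in\{+1,-1\}$ by automorphisms $\omega_j$ from Kirchberg--Phillips, so that in fact $K_*(\iota)=\id$ can be arranged, giving $\nucdim(B)\le 1$ again. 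Since that contradicts the known lower bounds for Kirchberg algebras with torsion, the resolution must be that Proposition \ref{k-theory} genuinely requires its hypothesis and fails without it, so the cancellation is unavailable; the corollary then follows only in the weaker form by a completely different input, namely combining Theorem \ref{main} with a decomposition of $B$ into two pieces each reducible to the torsion-free situation, at the cost of doubling-plus-one in the decomposition rank. I would present the proof as: reduce to the stable case, split $B$ up to $KK$-equivalence and hence up to $*$-isomorphism into a controlled combination of UCT-Kirchberg algebras with torsion-free $K_1$ (using the UCT classification of $K$-theory data and $\mathcal{O}_\infty$-absorption), apply Theorem \ref{main} to each, and use Lemma \ref{permanence} together with the $2d+1$-type gluing to get the bound $3$.
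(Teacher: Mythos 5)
Your argument rests on a false premise right at the start: an arbitrary stable UCT-Kirchberg algebra cannot be written as $A\rtimes_\alpha\mathbb{Z}$ with $A$ an AF-algebra. Indeed, if $K_1(A)=0$ then the Pimsner--Voiculescu sequence identifies $K_1(A\rtimes_\alpha\mathbb{Z})$ with $\ker(\id-\alpha_*)\subseteq K_0(A)$, which is torsion free; this is exactly why \cite[Corollary 8.4.11]{Ror02} carries the torsion-freeness hypothesis and why Theorem~\ref{main} is restricted to torsion-free $K_1$. Consequently your claim that the $\iota_n$ admit $1$-decomposable approximations for arbitrary $B$ has no basis, and your later diagnosis misplaces the obstruction: it is not that Proposition~\ref{k-theory} fails (with AF coefficients one has $K_1(A)=0$ and it would apply, and the proof of Theorem~\ref{main} would then indeed give $\nucdim(B)\le 1$); it is that the AF crossed product model simply does not exist when $K_1(B)$ has torsion. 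The proposed repair --- splitting $B$ ``up to $KK$-equivalence and hence up to $*$-isomorphism into a controlled combination of UCT-Kirchberg algebras with torsion-free $K_1$'' and gluing with a $2d+1$ count --- is not an argument: $B$ is simple, so it is not a direct sum; $KK$-equivalence does not control nuclear dimension; and neither Lemma~\ref{permanence} nor anything else in the paper bounds $\nucdim(B)$ by such a ``combination''. As written, the proposal also contradicts itself (first deriving $\nucdim(B)\le 1$ for every $B$, then retracting), so there is a genuine gap and no complete proof.

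The paper's actual proof is of a different and much shorter nature: it repeats the proof of \cite[Theorem 7.5]{WZ10} verbatim, only replacing the estimate $\nucdim(\mathcal{O}_\infty)\le 2$ used there by $\nucdim(\mathcal{O}_\infty)=1$, which is a special case of Theorem~\ref{main} since $K_1(\mathcal{O}_\infty)=0$; this turns the Winter--Zacharias bound $5=(1+1)(2+1)-1$, obtained from a tensor product estimate involving $\mathcal{O}_\infty$, into $(1+1)(1+1)-1=3$. If you want an argument along the lines you were reaching for, Remark~\ref{AT-case} sketches the correct version: by \cite[Theorem 3.6]{Ror95} a stable UCT-Kirchberg algebra with arbitrary $K$-theory can be written as $A\rtimes_\alpha\mathbb{Z}$ with $A$ an A$\mathbb{T}$-algebra and with vanishing Pimsner--Voiculescu boundary maps, so Proposition~\ref{k-theory} still applies and Proposition~\ref{approximation2} with $d=\nucdim(A)=1$ gives $(2d+1)=3$-decomposable approximations of the $\iota_n$; running the proof of Theorem~\ref{main} with these then yields $\nucdim(B)\le 3$. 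That is the legitimate incarnation of your ``doubling-plus-one'' heuristic --- it requires the A$\mathbb{T}$ coefficient model, not a $KK$-splitting.
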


\begin{proof}
This is proven exactly as in \cite[Theorem 7.5]{WZ10} except for using the improved estimate $\nucdim(\mathcal{O}_\infty)=1$ which is given by Theorem \ref{main}.
\end{proof} 

The strategy of this paper can be used to obtain dimension estimates in more general situations (cf. Remark \ref{Efren} below).
In fact, there are only two ingredients needed:
First, one needs sufficient classfication results, i.e. both existence and uniqueness results for homomorphisms between $C^*$-algebras in the class under consideration.
Second, one requires the existence of a family of homomorphisms which admit good approximations in a decomposable manner.
These homomorphisms should further induce maps on the level of invariants which can be used to build up an invertible element.
In that case, one can proceed as in Theorem \ref{main} to obtain dimension estimates.
However, the proof of \ref{main} seems to be limited to infinite $C^*$-algebras. 
This is because we require an automorphism $\omega$ which gives a sign on $K$-theory, i.e. which satisfies $\omega_*=-\id$.
However, the map $-\id$ will not be an automorphism as soon as the invariant involves some non-trivial order structure on the $K$-groups.
Therefore one cannot find such $\omega$ in this case.

\begin{remark}\label{Efren}
After having recieved a preprint of this paper, Ruiz, Sims and Tomforde estimated the nuclear dimension of certain graph $C^*$-algebras in \cite{RST13}, following the strategy outlined above.
Their result includes some, but not all Kirchberg algebras covered by Theorem \ref{main}.
Moreover, using classification results involving filtered-$K$-theory they also obtain estimates in the non-simple case.
\end{remark}

\begin{remark}\label{AT-case}
In the presence of torsion in $K_1$, one can still represent stable UCT-Kirchberg algebras $B$ as crossed products $A\rtimes_\alpha\mathbb{Z}$. 
We can no longer choose $A$ to be an AF-algebra, but one can settle for $A$ to be an A$\mathbb{T}$-algebra instead.
Furthermore, the crossed product can be constructed in such a way that the boundary maps in the associated Pimsner-Voiculescu sequence vanish (Theorem 3.6 of \cite{Ror95} and its proof) and therefore Proposition \ref{k-theory} applies. 
Following the lines of \ref{main} and using nuclear one-dimensionality of A$\mathbb{T}$-algebras, this gives an alternative proof for the general estimate $\nucdim(B)\leq 3$.
However, in this case it looks like the maps $\iota_n$ from \ref{iota} even admit $2$-decomposable approximations when contructed more carefully. 
This would give an improved dimension estimate in the torsion case.
\end{remark}

\end{document}